\theoremstyle{plain}
\newtheorem{thm}{Theorem}
\newtheorem{prop}[thm]{Proposition}
\newtheorem{cor}[thm]{Corollary}
\theoremstyle{definition}
\newtheorem{definition}[thm]{Definition}
\newtheorem{exl}[thm]{Example}
\numberwithin{thm}{section}
\def\Z{{\mathbb Z}}
\begin{document}
\title{Digital Shy Maps}
\author{Laurence Boxer
         \thanks{
    Department of Computer and Information Sciences,
    Niagara University,
    Niagara University, NY 14109, USA;
    and Department of Computer Science and Engineering,
    State University of New York at Buffalo.
    E-mail: boxer@niagara.edu
    }
}

\date{ }
\maketitle

\begin{abstract}
We study properties of shy maps in digital topology. \\

Key words and phrases: digital image, continuous multivalued function, shy map, isomorphism, Cartesian product, wedge
\end{abstract}

\section{Introduction}
Continuous functions between digital images were
introduced in ~\cite{Rosenfeld} and have been
explored in many subsequent papers. A {\em shy} map is a continuous
function with certain additional restrictions (see Definition~\ref{shyDef}). 
Shy maps were studied in~\cite{Boxer05,Boxer14,BoxS16}.
In the current paper, we develop additional properties of shy maps.

\section{Preliminaries}
\label{prelims}
Let $\Z$ be the set of integers. A digital image is a pair
$(X,\kappa)$ where $X \subset \Z^n)$ for some positive integer $n$,
and $\kappa$ is an adjacency relation for $Z^n$. Thus, a digital
image is a graph in which $X$ is the set of
vertices and edges correspond to $\kappa$-adjacent points of $X$.

Much of the exposition in this section is quoted or paraphrased from the papers referenced.

\subsection{Digitally continuous functions}

We will assume familiarity with the topological theory of digital images. See, e.g., \cite{Boxer94} for the standard definitions. All digital images $X$ are assumed to carry their own adjacency relations (which may differ from one image to another). We write the image as $(X,\kappa)$, where $\kappa$ 
represents the adjacency relation, when we wish to emphasize  or clarify the  adjacency relation.

Among the commonly used adjacencies are the $c_u$-adjacencies.
Let $x,y \in \Z^n$, $x \neq y$. Let $u$ be an integer,
$1 \leq u \leq n$. We say $x$ and $y$ are $c_u$-adjacent if
\begin{itemize}
\item There are at most $u$ indices $i$ for which 
      $|x_i - y_i| = 1$.
\item For all indices $j$ such that $|x_j - y_j| \neq 1$ we
      have $x_j=y_j$.
\end{itemize}
We often label a $c_u$-adjacency by the number of points
adjacent to a given point in $\Z^n$ using this adjacency.
E.g.,
\begin{itemize}
\item In $\Z^1$, $c_1$-adjacency is 2-adjacency.
\item In $\Z^2$, $c_1$-adjacency is 4-adjacency and
      $c_2$-adjacency is 8-adjacency.
\item In $\Z^3$, $c_1$-adjacency is 6-adjacency,
      $c_2$-adjacency is 18-adjacency, and $c_3$-adjacency
      is 26-adjacency.
\end{itemize}

A subset $Y$ of a digital image $(X,\kappa)$ is
{\em $\kappa$-connected}~\cite{Rosenfeld},
or {\em connected} when $\kappa$
is understood, if for every pair of points $a,b \in Y$ there
exists a sequence $P=\{y_i\}_{i=0}^m \subset Y$ such that
$a=y_0$, $b=y_m$, and $y_i$ and $y_{i+1}$ are 
$\kappa$-adjacent for $0 \leq i < m$. The set $P$ is called a {\em path}
from $a$ to $b$. 
The following generalizes a definition of
~\cite{Rosenfeld}.

\begin{definition}
\label{continuous}
{\rm ~\cite{Boxer99}}
Let $(X,\kappa)$ and $(Y,\lambda)$ be digital images. A function
$f: X \rightarrow Y$ is $(\kappa,\lambda)$-continuous if for
every $\kappa$-connected $A \subset X$ we have that
$f(A)$ is a $\lambda$-connected subset of $Y$. \qed
\end{definition}

When the adjacency relations are understood, we will simply say that $f$ is \emph{continuous}.

Given positive integers $u,v$ such that $u \leq v$ a digital interval
is a set of the form
\[ [u,v]_{\Z} = \{z \in \Z \, | \, u \leq z \leq v\}
\]
treated as a digital image with the $c_1$-adjacency.

The term {\em path from $a$ to $b$} is also used for a continuous function
$p: ([0,m]_{\Z}, c_1) \to (Y,\kappa)$ such that $f(0)=a$ and $f(m)=b$. Context
generally clarifies which meaning of {\em path} is appropriate.

Continuity can be reformulated in terms of adjacency of points:
\begin{thm}
\label{pt-cont}
{\rm ~\cite{Rosenfeld,Boxer99}}
A function $f:X\to Y$ is continuous if and only if, for any adjacent points $x,x'\in X$, the points $f(x)$ and $f(x')$ are equal or adjacent. \qed
\end{thm}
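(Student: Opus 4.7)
My plan is to prove both implications directly, using the definition of continuity (preservation of $\kappa$-connectedness of subsets) and the path characterization of connectedness.

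For the forward direction, I would assume $f$ is $(\kappa,\lambda)$-continuous and let $x,x' \in X$ be $\kappa$-adjacent. The two-point set $A = \{x,x'\}$ is $\kappa$-connected, since $x,x'$ themselves form a path of length $1$. By continuity, $f(A) = \{f(x),f(x')\}$ is $\lambda$-connected. If $f(x) = f(x')$ we are done; otherwise $f(A)$ consists of two distinct points, and the only way a two-point set can be $\lambda$-connected is if the points are $\lambda$-adjacent (since any path joining them inside $f(A)$ uses only those two points).

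For the converse, I would assume that adjacent points map to equal-or-adjacent points, and let $A \subset X$ be $\kappa$-connected. Given $f(a), f(b) \in f(A)$ I need to build a $\lambda$-path between them inside $f(A)$. Pick a $\kappa$-path $\{y_0, y_1, \ldots, y_m\}$ in $A$ from $a$ to $b$; by hypothesis, for each $i$ the images $f(y_i)$ and $f(y_{i+1})$ are equal or $\lambda$-adjacent. The sequence $\{f(y_0), \ldots, f(y_m\}$ therefore lies in $f(A)$ with consecutive entries satisfying the required relation; after collapsing consecutive duplicates I obtain a bona fide $\lambda$-path in $f(A)$ from $f(a)$ to $f(b)$.

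The argument is entirely routine; the only mild subtlety is the ``equal or adjacent'' slack in both directions. In the forward direction this is exactly what lets the proof work when $f$ identifies the two adjacent points (which a connectedness-preserving map is certainly allowed to do), and in the reverse direction the same slack forces us to pass from the sequence $\{f(y_i)\}$ to an actual path by deleting repeated consecutive terms. Neither step presents a real obstacle, so I expect the proof to be short.
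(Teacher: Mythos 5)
Your proof is correct and is essentially the standard argument (the paper itself does not prove this theorem but cites it to Rosenfeld and Boxer99, where this is how it is done): the two-point connected set handles the forward direction, and pushing a path forward and collapsing consecutive duplicates handles the converse. Both of the points you flag as subtleties are handled properly, since adjacency holds only between distinct points, so a path inside a two-point set forces adjacency, and the collapsed image sequence is a genuine path in $f(A)$.
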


\begin{thm}
\label{composition}
\rm{\cite{Boxer94,Boxer99}}
Let $f: (A, \kappa) \to (B, \lambda)$ and
$g: (B, \lambda) \to (C, \mu)$ be continuous functions between
digital images.  Then $g \circ f: (A, \kappa) \to (C, \mu)$ is
continuous. \qed
\end{thm}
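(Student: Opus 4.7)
\medskip

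\noindent\textbf{Proof plan for Theorem~\ref{composition}.}
My plan is to apply Definition~\ref{continuous} directly, which gives a one-line argument, and then note the alternative route via Theorem~\ref{pt-cont} for completeness. The statement is essentially the assertion that ``connected image of connected image is connected,'' packaged as a composition fact.

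First I would let $A' \subset A$ be an arbitrary $\kappa$-connected subset. By $(\kappa,\lambda)$-continuity of $f$, the set $f(A')$ is a $\lambda$-connected subset of $B$. Then, applying $(\lambda,\mu)$-continuity of $g$ to the $\lambda$-connected set $f(A') \subset B$, the set $g(f(A')) = (g \circ f)(A')$ is a $\mu$-connected subset of $C$. Since $A'$ was an arbitrary $\kappa$-connected subset of $A$, Definition~\ref{continuous} tells us $g \circ f$ is $(\kappa,\mu)$-continuous.

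Alternatively, one can use the pointwise reformulation of Theorem~\ref{pt-cont}. Suppose $x, x' \in A$ are $\kappa$-adjacent. Then $f(x)$ and $f(x')$ are either equal or $\lambda$-adjacent. If they are equal, then $g(f(x)) = g(f(x'))$. If they are $\lambda$-adjacent, continuity of $g$ implies $g(f(x))$ and $g(f(x'))$ are equal or $\mu$-adjacent. In either case, the images under $g \circ f$ of $\kappa$-adjacent points are equal or $\mu$-adjacent, so $g \circ f$ is continuous by Theorem~\ref{pt-cont}.

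There is no real obstacle here; the content of the theorem is immediate from either formulation of continuity already available in the excerpt. The only thing to be careful about is applying the correct adjacency relation at each step ($\kappa$ on $A$, $\lambda$ on $B$, $\mu$ on $C$) and, in the pointwise version, not forgetting the ``equal'' case when passing through $B$.
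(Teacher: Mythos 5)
Your argument is correct: the paper cites this result from earlier work without reproducing a proof, and your direct application of Definition~\ref{continuous} (the image of a $\kappa$-connected set under $f$ is $\lambda$-connected, hence its image under $g$ is $\mu$-connected) is exactly the standard argument, with the pointwise version via Theorem~\ref{pt-cont} an equally valid alternative. Both routes are sound, and you correctly handle the ``equal or adjacent'' case when passing through $B$.
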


\begin{definition}
\label{iso}
A function $f:X\to Y$ is an {\em isomorphism}~\cite{Boxer06}
(called a {\em homeomorphism} in~\cite{Boxer94}) if $f$ is
a continuous bijection and $f^{-1}$ is continuous. \qed
\end{definition}

\begin{definition}
\label{shyDef}
\rm{~\cite{Boxer05}}
Let $f: (X, \kappa) \to (Y, \lambda)$ be a continuous surjection of
digital images. We say $f$ is {\em shy} if
\begin{itemize}
\item for every $y \in Y$, $f^{-1}(y)$ is $\kappa$-connected; and
\item for every $\lambda$-adjacent $y_0,y_1 \in Y$, 
      $f^{-1}(\{y_0,y_1\})$ is $\kappa$-connected. \qed
\end{itemize}
\end{definition}

It is known~\cite{Boxer05} that shy maps induce surjections of 
fundamental groups and that~\cite{BoxS16} a continuous surjection
$f: (X, \kappa) \to (Y, \lambda)$ is shy if and only if
$f^{-1}: (Y, \lambda) \multimap (X, \kappa)$ is a connectivity
preserving multivalued function (see Definition~\ref{preserving}).
We also have the following.

\begin{thm}
\rm{\cite{Boxer14}}
\label{shy-cont}
Let $f: X \to Y$ be a continuous surjection of digital
images. Then $f$ is shy if and only if
for every connected subset $Y'$ of $Y$, $f^{-1}(Y')$ is connected.
\end{thm}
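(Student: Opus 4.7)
The plan is to prove both implications directly from the definitions, with the forward direction being the substantive one and the converse being essentially immediate.

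For the easy direction, suppose the preimage-of-connected-sets property holds. Given any $y \in Y$, the singleton $\{y\}$ is $\lambda$-connected, so $f^{-1}(y) = f^{-1}(\{y\})$ is $\kappa$-connected. Given any $\lambda$-adjacent $y_0, y_1 \in Y$, the two-point set $\{y_0,y_1\}$ is $\lambda$-connected by definition of adjacency, so $f^{-1}(\{y_0,y_1\})$ is $\kappa$-connected. These are exactly the two conditions of Definition~\ref{shyDef}, so $f$ is shy.

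For the forward direction, assume $f$ is shy and let $Y' \subset Y$ be $\lambda$-connected. I need to show that any two points $x, x' \in f^{-1}(Y')$ can be joined by a $\kappa$-path inside $f^{-1}(Y')$. Set $y = f(x)$ and $y' = f(x')$; both lie in $Y'$, so by connectedness of $Y'$ there is a sequence $y = y_0, y_1, \ldots, y_m = y'$ in $Y'$ with $y_i$ and $y_{i+1}$ $\lambda$-adjacent for each $i$. The shy hypothesis then supplies, for each $i$, a $\kappa$-connected set $A_i = f^{-1}(\{y_i, y_{i+1}\}) \subset f^{-1}(Y')$. Consecutive $A_i$ and $A_{i+1}$ share the nonempty set $f^{-1}(y_{i+1})$ (which is itself $\kappa$-connected, and in particular nonempty since $f$ is surjective), so their union is $\kappa$-connected. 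Iterating, $\bigcup_{i=0}^{m-1} A_i$ is a $\kappa$-connected subset of $f^{-1}(Y')$ containing both $x \in f^{-1}(y_0)$ and $x' \in f^{-1}(y_m)$, which yields a $\kappa$-path from $x$ to $x'$ in $f^{-1}(Y')$.

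The only technical point requiring care is the induction that a finite union of $\kappa$-connected sets with consecutive overlaps is $\kappa$-connected; this is a standard and routine fact about connected sets in a graph, following by concatenating paths through a shared vertex at each step. No step in this argument presents a genuine obstacle, and the whole proof reduces essentially to reading off the definitions once the chain of preimages $A_0, A_1, \ldots, A_{m-1}$ is set up.
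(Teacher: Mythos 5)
The paper does not actually prove this theorem: it is stated with only a citation to \cite{Boxer14}, so there is no in-paper argument to compare against. Your proposal is a correct, self-contained proof and follows the natural route: the converse direction is immediate because singletons and adjacent pairs are connected, and the forward direction chains the preimages $A_i=f^{-1}(\{y_i,y_{i+1}\})$ along a $\lambda$-path in $Y'$, using that consecutive $A_i$'s overlap in the nonempty connected set $f^{-1}(y_{i+1})$ and that a finite union of connected sets with consecutive nonempty overlaps is connected. The one point to tighten is the degenerate case $f(x)=f(x')$: the sequence in $Y'$ may then consist of a single point ($m=0$), so your union $\bigcup_{i=0}^{m-1}A_i$ is empty and does not contain $x$ and $x'$; in that case you should instead invoke the first condition of Definition~\ref{shyDef} directly, namely that $f^{-1}(f(x))$ is $\kappa$-connected. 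With that one-line addition the argument is complete.
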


\subsection{Normal product adjacency}
The {\em normal product adjacency} has been used in many
papers for Cartesian products of graphs. 

\begin{definition}
\label{normal}
\cite{Berge}
Let $(X,\kappa)$ and $(Y,\lambda)$ be digital images. The {\em normal
product adjacency} $k_*(\kappa,\lambda)$ for $X \times Y$ is defined
as follows. Two members $(x_0,y_0)$ and $(x_1, y_1)$ of $X \times Y$
are $k_*(\kappa,\lambda)$-adjacent if and only if one of the following
is valid.
\begin{itemize}
\item $x_0=x_1$, and $y_0$ and $y_1$ are $\lambda$-adjacent; or
\item $x_0$ and $x_1$ are $\kappa$-adjacent, and $y_0=y_1$; or
\item $x_0$ and $x_1$ are $\kappa$-adjacent, and
      $y_0$ and $y_1$ are $\lambda$-adjacent. \qed
\end{itemize}
\end{definition}

We will use the following properties.

\begin{prop}
\rm{\cite{Han05}}
\label{projections}
The projection maps 
$p_1: (X \times Y, k_*(\kappa,\lambda)) \to (X, \kappa)$ and
$p_2: (X \times Y, k_*(\kappa,\lambda)) \to (Y, \lambda)$
defined by $p_1(x,y)=x$, $p_2(x,y)=y$ are
$(k_*(\kappa,\lambda), \kappa)$-continuous and
$(k_*(\kappa,\lambda), \lambda)$-continuous, respectively. \qed
\end{prop}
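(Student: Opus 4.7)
The plan is to reduce continuity to the pointwise criterion of Theorem~\ref{pt-cont} and then dispatch the three cases of Definition~\ref{normal} directly. By Theorem~\ref{pt-cont}, it suffices to check that whenever $(x_0,y_0)$ and $(x_1,y_1)$ are $k_*(\kappa,\lambda)$-adjacent in $X\times Y$, the images $p_1(x_0,y_0)=x_0$ and $p_1(x_1,y_1)=x_1$ are equal or $\kappa$-adjacent in $X$, and similarly for $p_2$.

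First I would fix an arbitrary $k_*(\kappa,\lambda)$-adjacent pair $(x_0,y_0),(x_1,y_1)$ and split into the three clauses in Definition~\ref{normal}. In clause~(i), $x_0=x_1$ so $p_1(x_0,y_0)=p_1(x_1,y_1)$, which is trivially allowed; meanwhile $y_0$ and $y_1$ are $\lambda$-adjacent, so $p_2$ sends the pair to a $\lambda$-adjacent pair. In clause~(ii) the roles are reversed: $p_1$ sends the pair to a $\kappa$-adjacent pair and $p_2$ sends it to a single point. In clause~(iii) the two coordinates are simultaneously $\kappa$- and $\lambda$-adjacent, so both projections send the pair to adjacent points in the respective images. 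In every case the hypothesis of Theorem~\ref{pt-cont} is satisfied, so each projection is continuous with respect to the asserted adjacencies.

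There is essentially no obstacle here; the statement is really just an unpacking of Definition~\ref{normal} in light of the pointwise characterization of continuity. The only thing to take care of is to present all three cases for each projection so that no adjacency clause is overlooked, in particular to remember that the ``equal'' outcome for a projection is permitted by Theorem~\ref{pt-cont} and does not need to be treated as a failure of adjacency.
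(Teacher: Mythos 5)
Your proof is correct. The paper gives no argument of its own for this proposition---it is quoted from \cite{Han05} and stated with a \qed---so there is nothing to compare against; your reduction via Theorem~\ref{pt-cont} followed by the three-clause case analysis of Definition~\ref{normal} is exactly the natural direct verification, and every case is handled, including the ``equal coordinates'' outcomes that Theorem~\ref{pt-cont} explicitly permits.
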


\begin{prop}
\label{prodAndc_u}
\rm{~\cite{BoxKar12}}
In $\Z^{m+n}$, $k_*(c_m,c_n) = c_{m+n}$; i.e., given points
$x,x' \in \Z^m$ and $y,y' \in \Z^n$, $(x,y)$ and $(x',y')$ are 
$k_*(c_m,c_n)$-adjacent in $\Z^{m+n}$ if and
only if they are $c_{m+n}$-adjacent. \qed
\end{prop}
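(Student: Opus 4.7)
The plan is to verify the two implications by unpacking the definitions of $c_u$-adjacency and of $k_*(c_m,c_n)$-adjacency, and then doing a short case analysis on which of the two coordinate blocks actually changes.

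For the forward direction, suppose $(x,y)$ and $(x',y')$ are $k_*(c_m,c_n)$-adjacent. I would take each of the three clauses of Definition~\ref{normal} in turn. In the first clause, $x=x'$, so the first $m$ coordinates of $(x,y)$ and $(x',y')$ agree; since $y,y'$ are $c_n$-adjacent, at most $n$ of the remaining coordinates differ (each by exactly $1$) and the rest agree, so certainly at most $m+n$ coordinates differ by exactly $1$ while the others agree. The second clause is symmetric, and in the third clause at most $m$ coordinates from the first block and at most $n$ from the second block differ by $1$, for a total of at most $m+n$. In every case this is exactly the defining condition for $c_{m+n}$-adjacency.

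For the reverse direction, suppose $(x,y)$ and $(x',y')$ are $c_{m+n}$-adjacent. Define the index sets $A=\{i\le m : x_i\neq x'_i\}$ and $B=\{j\le n : y_j\neq y'_j\}$. The definition of $c_{m+n}$-adjacency forces $x_i-x'_i=\pm 1$ on $A$ and $y_j-y'_j=\pm 1$ on $B$, while all other coordinates coincide; and $(x,y)\neq(x',y')$ guarantees that $A\cup B\neq \emptyset$. The trivial bounds $|A|\le m$ and $|B|\le n$ now let me invoke the appropriate clause of Definition~\ref{normal}: if $A=\emptyset$ use clause~1, if $B=\emptyset$ use clause~2, and if both are nonempty use clause~3 (noting that $x,x'$ are then $c_m$-adjacent and $y,y'$ are $c_n$-adjacent).

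This is essentially a bookkeeping argument, and I do not expect any serious obstacle. The only thing to be slightly careful about is that the third clause of the normal product adjacency requires \emph{both} coordinate blocks to change, so the case split on whether $A$ or $B$ is empty is genuinely necessary rather than cosmetic; otherwise one would miss the adjacencies where only one block moves.
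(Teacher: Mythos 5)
Your argument is correct. Note that the paper itself gives no proof of this proposition -- it is quoted from \cite{BoxKar12} and stated with a \qed -- so there is no internal proof to compare against; your direct unpacking of the definitions is exactly the standard verification one would expect. Both directions check out: in $\Z^{m+n}$ the bound ``at most $m+n$ differing coordinates'' is vacuous, so $c_{m+n}$-adjacency just says the points are distinct and every coordinate differs by at most $1$, and your case split on which block actually changes (with the observation that clause~3 of Definition~\ref{normal} needs both blocks to move, so the split is genuinely needed) translates this faithfully into the three clauses of the normal product adjacency. The only cosmetic omission is in the forward direction, where you could remark that in each clause at least one component pair is adjacent and hence unequal, so $(x,y)\neq(x',y')$ as required by the definition of $c_{m+n}$-adjacency; this is immediate and not a gap.
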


\subsection{Digital multivalued functions}
To ameliorate limitations and anomalies that appear in the study of
continuous functions between digital images, several authors
have considered multivalued functions with various forms of
continuity. Functions with
{\em weak continuity} and {\em strong continuity} were
introduced in~\cite{Ts} and studied further in~\cite{BoxS16}.
{\em Connectivity preserving multivalued functions} were
introduced in ~\cite{Kov} and studied further in~\cite{BoxS16}.
{\em Continuous multivalued functions} were introduced in~\cite{egs08,egs12} and studied further 
in~\cite{gs15,Boxer14,BoxS16}. We use the following.

\begin{definition}
\rm{\cite{Kov}}
\label{preserving}
A multivalued function $f: X \multimap Y$ is 
{\em connectivity preserving} if for every connected subset $A$ of
$X$, $f(A)$ is a connected subset of $Y$. \qed
\end{definition}

\begin{definition}
\label{adjSets}
Let $A$ and $B$ be subsets of a digital image $(X,\kappa)$. We
say $A$ and $B$ are {\em $\kappa$-adjacent}, or {\em adjacent} 
for short, if there exist $a \in A$ and $b \in B$ such that either
$a=b$ or $a$ and $b$ are $\kappa$-adjacent. \qed
\end{definition}

\begin{definition}
\rm{\cite{Ts}}
\label{weakCont}
A multivalued function $f: X \multimap Y$ has 
{\em weak continuity} if for every pair $x,y$ of
adjacent points in $X$, the sets $f(x)$ and $f(y)$ are
adjacent in $Y$. \qed
\end{definition}

\begin{prop}
\label{weakCharacterize}
\rm{\cite{BoxS16}}
Let $f: X \multimap Y$ be a multivalued function between
digital images. Then $f$ is connectivity preserving if
and only if $f$ has weak continuity and for all $x \in X$,
$f(x)$ is a connected subset of $Y$.
\qed
\end{prop}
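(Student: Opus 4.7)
The plan is to prove the biconditional directly in both directions, and the workhorse for both will be a small lemma: if $A$ and $B$ are connected subsets of a digital image $(Z,\mu)$, then $A\cup B$ is $\mu$-connected if and only if $A$ and $B$ are $\mu$-adjacent in the sense of Definition~\ref{adjSets}. One direction is clear since a witnessing pair $a\in A$, $b\in B$ that is adjacent or equal lets us splice any path in $A$ to any path in $B$. For the other direction, if $A\cup B$ is connected and $A,B$ are not adjacent, pick $a\in A$, $b\in B$ and take a path in $A\cup B$ from $a$ to $b$; the first step leaving $A$ must land in $B$, producing an adjacency between a point of $A$ and a point of $B$, contradicting non-adjacency.

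For the forward implication I would assume $f$ is connectivity preserving. Applied to the connected set $\{x\}$, we immediately get that $f(x)$ is connected for every $x\in X$. For weak continuity, take adjacent $x,y\in X$; then $\{x,y\}$ is connected, so $f(\{x,y\})=f(x)\cup f(y)$ is connected. Since $f(x)$ and $f(y)$ are each connected by what we just showed, the lemma above forces them to be adjacent, which is exactly weak continuity.

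For the reverse implication, assume $f$ has weak continuity and each $f(x)$ is connected. Let $A\subset X$ be connected; I want $f(A)=\bigcup_{a\in A}f(a)$ connected. Given $p,q\in f(A)$, pick $a,b\in A$ with $p\in f(a)$ and $q\in f(b)$, and a path $a=a_0,a_1,\dots,a_n=b$ in $A$. By weak continuity, consecutive $f(a_i),f(a_{i+1})$ are adjacent, and each $f(a_i)$ is connected, so an induction using the lemma shows $\bigcup_{i=0}^n f(a_i)$ is connected. Therefore $p$ and $q$ lie in a common connected subset of $f(A)$, which gives $f(A)$ connected.

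The only real obstacle is the adjacency-of-connected-sets lemma; I would either prove it explicitly at the start of the proof or invoke it as a standard fact about digital images. Everything else is bookkeeping with singletons, two-point sets, and the finite induction in the reverse direction.
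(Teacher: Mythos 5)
Your argument is correct and self-contained. Note that the paper states this proposition only as a quoted result from \cite{BoxS16}, with no proof reproduced here, and your route---reducing everything to the lemma that a union of two connected subsets is connected if and only if the subsets are adjacent in the sense of Definition~\ref{adjSets}, then applying it to singletons, to adjacent pairs for weak continuity, and via a finite induction along a path for the converse---is essentially the standard argument used in that reference, so there is nothing substantively different to compare. The only point worth flagging is the implicit assumption that each value $f(x)$ is nonempty, which your lemma's ``only if'' direction needs (an empty set is vacuously non-adjacent to anything) and which is part of the usual definition of a multivalued function in this literature.
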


\begin{thm}
\label{shy-connPreserving}
\rm{\cite{BoxS16}}
A continuous surjection $f: X \to Y$ of digital images is shy
if and only if $f^{-1}: Y \multimap X$ is a connectivity 
preserving multivalued function. \qed
\end{thm}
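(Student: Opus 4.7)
The plan is to use Proposition~\ref{weakCharacterize} to rewrite the right-hand side: $f^{-1}: Y \multimap X$ is connectivity preserving if and only if (a) $f^{-1}(y)$ is $\kappa$-connected for every $y \in Y$, and (b) $f^{-1}$ has weak continuity. Condition (a) is literally the first bullet of Definition~\ref{shyDef}, so it matches the shy hypothesis verbatim and can be set aside. What remains is to show that, in the presence of (a), the second bullet of Definition~\ref{shyDef} is equivalent to (b).

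For the direction shy $\Rightarrow$ weak continuity, I would take $\lambda$-adjacent points $y_0,y_1 \in Y$. Since $f$ is a surjection, $f^{-1}(y_0)$ and $f^{-1}(y_1)$ are nonempty, and by the shy hypothesis $f^{-1}(\{y_0,y_1\}) = f^{-1}(y_0) \cup f^{-1}(y_1)$ is $\kappa$-connected. A connected set cannot be written as the union of two nonempty disjoint subsets with no $\kappa$-adjacency between them, so some $a \in f^{-1}(y_0)$ must be $\kappa$-adjacent to some $b \in f^{-1}(y_1)$ (they cannot be equal since $f$ is a function and $y_0 \neq y_1$). This is exactly the weak continuity condition of Definition~\ref{weakCont} applied to $f^{-1}$.

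For the converse, assume (a) and (b) and let $y_0,y_1$ be $\lambda$-adjacent. By (a), each of $f^{-1}(y_0)$ and $f^{-1}(y_1)$ is $\kappa$-connected; by (b), there exist $a \in f^{-1}(y_0)$ and $b \in f^{-1}(y_1)$ that are either equal or $\kappa$-adjacent. Joining $\kappa$-paths inside $f^{-1}(y_0)$ and $f^{-1}(y_1)$ through $a$ and $b$ produces a $\kappa$-path between any two points of $f^{-1}(\{y_0,y_1\})$, so this union is $\kappa$-connected, verifying the second bullet of Definition~\ref{shyDef}.

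The argument is almost bookkeeping once Proposition~\ref{weakCharacterize} is invoked; the only real content is the small gluing lemma that two nonempty $\kappa$-connected sets whose union is connected must be adjacent, and conversely. I expect no real obstacle — the main thing to get right is separating the two bullets of the shy definition cleanly so they correspond, respectively, to the pointwise-connectedness and the weak-continuity halves of Proposition~\ref{weakCharacterize}.
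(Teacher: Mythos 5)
Your argument is correct, but note that the paper itself offers no proof to compare against: Theorem~\ref{shy-connPreserving} is quoted from \cite{BoxS16} and stated with a \qed, as is Proposition~\ref{weakCharacterize}, which your proof leans on. Within the toolkit the paper sets up, your derivation is sound and complete: reducing via Proposition~\ref{weakCharacterize} correctly identifies the first bullet of Definition~\ref{shyDef} with the pointwise-connectedness half, and your two lemma-sized steps both work --- a path-connected union of the two disjoint nonempty fibers must contain a consecutive pair straddling the fibers (giving weak continuity of $f^{-1}$, since fibers over distinct points cannot share a point), and conversely an adjacency $a \adj b$ between the connected fibers lets you concatenate paths to connect $f^{-1}(\{y_0,y_1\})$. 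Surjectivity is used exactly where needed (nonempty fibers), and continuity of $f$ plays no role in the equivalence, which is consistent with the cited statement. In effect you have proved the equivalence of statements (1) and (4) of the paper's later four-way characterization directly, whereas the paper obtains (1)$\Leftrightarrow$(3) by citation and (3)$\Leftrightarrow$(4) from Proposition~\ref{weakCharacterize}; an alternative route of comparable length would be through Theorem~\ref{shy-cont} (preimages of connected sets are connected), but your adjacency-level argument is the more elementary and self-contained of the two, modulo the cited Proposition~\ref{weakCharacterize}.
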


\section{Shy, constant, and isomorphism functions}
\label{moreProps}
\begin{prop}
A constant map between connected digital images is shy. I.e., if
$X$ is a connected digital image and $y \in Z^n$, then the function
$f: X \to \{y\}$ is shy.
\end{prop}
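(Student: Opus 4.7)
The plan is to verify directly the three requirements of Definition~\ref{shyDef}: that $f$ is a continuous surjection, that preimages of points are connected, and that preimages of adjacent pairs are connected.

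First I would note that $f$ is trivially a surjection onto $\{y\}$, and that $f$ is continuous by Theorem~\ref{pt-cont}, since any two adjacent points of $X$ are mapped to the same point $y$. So the hypotheses preceding the bulleted conditions in Definition~\ref{shyDef} are satisfied.

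Next I would check the two bulleted conditions. For the first, the unique point of the codomain is $y$, and $f^{-1}(y) = X$, which is $\kappa$-connected by hypothesis. For the second, observe that adjacency in the sense used throughout the paper requires two \emph{distinct} points (as explicitly stated in the definition of $c_u$-adjacency and implicit in the general usage), so the singleton $\{y\}$ contains no pair of adjacent points and the condition holds vacuously.

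There is no real obstacle here; the statement is essentially a sanity check that the definition of shy behaves correctly on the trivial case. The only subtlety worth flagging explicitly in the write-up is the vacuous verification of the second bullet, since a careless reading that allows $y_0 = y_1$ would still go through (as $f^{-1}(\{y,y\}) = X$ is connected), so either interpretation gives the same conclusion.
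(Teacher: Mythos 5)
Your verification is correct and amounts to exactly what the paper leaves implicit (its proof is simply ``This is obvious''): $f$ is a continuous surjection, $f^{-1}(y)=X$ is connected by hypothesis, and the adjacent-pair condition in Definition~\ref{shyDef} holds vacuously (or trivially, on the reading allowing $y_0=y_1$). No further comment is needed.
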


\begin{proof} This is obvious.
\end{proof}

Previous results give the following characterizations of shy maps.

\begin{thm}
Let $f: X \to Y$ be a continuous surjection between
digital images. The following are equivalent.
\begin{enumerate}
\item $f$ is a shy map.
\item For every connected $Y' \subset Y$, $f^{-1}(Y')$ is a
      connected subset of $X$.
\item $f^{-1}: Y \multimap X$ is a connectivity preserving
      multi-valued function.
\item $f^{-1}: Y \multimap X$ is a multi-valued function with
      weak continuity such that for all $y \in Y$,
      $f^{-1}(y)$ is a connected subset of $X$.
\end{enumerate}
\end{thm}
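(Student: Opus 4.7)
The plan is to observe that every equivalence we need has already been established in the preliminaries, so the proof amounts to stringing three known results together in a cycle (or, equivalently, showing each condition is equivalent to ``$f$ is shy'').

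First I would dispatch the equivalence of (1) and (2) by appealing directly to Theorem~\ref{shy-cont}, which is precisely this statement. Next, the equivalence of (1) and (3) is exactly the content of Theorem~\ref{shy-connPreserving}, so no further argument is needed. Finally, the equivalence of (3) and (4) is obtained by applying Proposition~\ref{weakCharacterize} to the multivalued function $f^{-1}: Y \multimap X$: that proposition says a multivalued function is connectivity preserving if and only if it has weak continuity and sends points to connected sets, which is the content of (4) in our setting.

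Assembling these, the implications $(1)\Leftrightarrow(2)$, $(1)\Leftrightarrow(3)$, and $(3)\Leftrightarrow(4)$ give all pairwise equivalences among the four conditions. There is no genuine obstacle here; the statement is essentially a convenient consolidation of Theorems~\ref{shy-cont} and~\ref{shy-connPreserving} together with Proposition~\ref{weakCharacterize}. The only thing to be careful about is to note (in invoking Proposition~\ref{weakCharacterize}) that since $f$ is assumed surjective, $f^{-1}(y)$ is nonempty for each $y \in Y$, so there is no degenerate case to worry about, and ``$f^{-1}(y)$ connected for all $y$'' indeed captures the pointwise-connectedness hypothesis of Proposition~\ref{weakCharacterize}.
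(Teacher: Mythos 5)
Your proposal is correct and follows essentially the same route as the paper: the author likewise cites Theorem~\ref{shy-cont} and Theorem~\ref{shy-connPreserving} for the equivalence of the first three conditions and Proposition~\ref{weakCharacterize} for the equivalence of the third and fourth. Your added remark about nonemptiness of $f^{-1}(y)$ is a harmless extra observation, not a divergence in method.
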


\begin{proof}
The equivalence of the first three statements follows from Theorem~\ref{shy-cont} and
Theorem~\ref{shy-connPreserving}. The equivalence of the
third and fourth statements follows from Proposition~\ref{weakCharacterize}.
\end{proof}

\begin{thm}
Let $f: (X, \kappa) \to (Y, \lambda)$ be a continuous surjection.
\begin{itemize}
\item If $f$ is an isomorphism then $f$ is shy.
\item If $f$ is shy and one-to-one, then $f$ is an isomorphism.
\end{itemize}
\end{thm}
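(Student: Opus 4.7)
The plan is to handle the two bullets separately, using Theorem~\ref{pt-cont} (point-wise characterization of continuity) as the main tool in each direction, together with the observation that when $f$ is a bijection, preimage sets are singletons (so the shy condition degenerates to a statement about two-point sets).

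For the first bullet, suppose $f$ is an isomorphism. Since $f$ is a bijection, for every $y\in Y$ the fiber $f^{-1}(y)$ is a single point and hence trivially $\kappa$-connected, so the first clause of Definition~\ref{shyDef} is immediate. For the second clause, let $y_0,y_1\in Y$ be $\lambda$-adjacent. Then $f^{-1}(\{y_0,y_1\})=\{f^{-1}(y_0),f^{-1}(y_1)\}$ is a two-point set, and applying Theorem~\ref{pt-cont} to the continuous map $f^{-1}$ forces $f^{-1}(y_0)$ and $f^{-1}(y_1)$ to be equal or $\kappa$-adjacent; injectivity of $f^{-1}$ rules out equality, so they are $\kappa$-adjacent and the two-point set is $\kappa$-connected.

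For the second bullet, suppose $f$ is shy and one-to-one. Because $f$ is a continuous surjection by hypothesis and also injective, $f$ is a continuous bijection, so it remains to verify that $f^{-1}:Y\to X$ is continuous. I will use Theorem~\ref{pt-cont}: given $\lambda$-adjacent $y_0,y_1\in Y$, I must show $f^{-1}(y_0)$ and $f^{-1}(y_1)$ are equal or $\kappa$-adjacent. Since $f$ is shy, $f^{-1}(\{y_0,y_1\})$ is $\kappa$-connected; since $f$ is injective and $y_0\neq y_1$, this preimage consists of exactly the two distinct points $f^{-1}(y_0)$ and $f^{-1}(y_1)$; and a two-point set is $\kappa$-connected precisely when its two points are $\kappa$-adjacent. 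Thus the required adjacency holds and $f^{-1}$ is continuous.

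Neither direction presents a genuine obstacle; the whole argument rests on the trivial observation that, under injectivity, the shy condition on the pair $\{y_0,y_1\}$ and the continuity condition on $f^{-1}$ at the pair $\{y_0,y_1\}$ are literally the same statement (both say that the two preimage points are $\kappa$-adjacent). The only small care needed is to invoke injectivity to exclude the equality case and to rule out the degenerate reading of $f^{-1}(\{y_0,y_1\})$ as a singleton.
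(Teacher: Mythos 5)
Your proof is correct and follows essentially the same route as the paper's (much terser) argument: both bullets come down to the observation that, for a bijection, continuity of $f^{-1}$ on $\lambda$-adjacent pairs and the shyness conditions on fibers and adjacent pairs are the same statement, the paper citing Definitions~\ref{iso}, \ref{continuous}, and~\ref{shyDef} where you invoke Theorem~\ref{pt-cont} explicitly. Your write-up simply makes explicit the details the paper leaves as ``follows easily.''
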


\begin{proof}
Since a single point is connected, the first assertion follows
easily from Definitions~\ref{iso} and~\ref{shyDef}.

Suppose $f$ is shy and one-to-one. Shyness implies that $f$ is a surjection,
hence a bijection; and, from Definitions~\ref{continuous} and 
~\ref{shyDef}, that $f^{-1}$ is continuous. Therefore,
$f$ is an isomorphism.
\end{proof}

\section{Operations that preserve shyness}
\label{ops}
We show that composition preserves shyness.

\begin{thm}
Let $f: A \to B$  and $g: B \to C$ be shy maps. Then
$g \circ f: A \to C$ is shy.
\end{thm}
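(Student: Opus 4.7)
The plan is to invoke the characterization of shyness given by Theorem~\ref{shy-cont}: a continuous surjection is shy if and only if the preimage of every connected subset of the codomain is connected. This is much more tractable than working directly with Definition~\ref{shyDef}, which would force me to analyze fibers over single points and over adjacent pairs separately.

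First I would verify the easy preconditions. Continuity of $g \circ f$ is immediate from Theorem~\ref{composition}. Surjectivity follows since shyness includes surjectivity as part of Definition~\ref{shyDef}, so $f(A)=B$ and $g(B)=C$, giving $(g \circ f)(A) = g(f(A)) = g(B) = C$.

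The main step is then to take an arbitrary connected subset $C' \subseteq C$ and show that $(g \circ f)^{-1}(C')$ is connected in $A$. I would apply Theorem~\ref{shy-cont} twice: since $g$ is shy, $g^{-1}(C')$ is a connected subset of $B$; since $f$ is shy, $f^{-1}(g^{-1}(C'))$ is a connected subset of $A$. The identity $(g \circ f)^{-1}(C') = f^{-1}(g^{-1}(C'))$ is elementary set theory and completes the argument. Applying Theorem~\ref{shy-cont} in the other direction then concludes that $g \circ f$ is shy.

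There is no substantive obstacle here; the proof is essentially a two-line application of Theorem~\ref{shy-cont}. The only ``choice'' is to prefer this characterization over unpacking Definition~\ref{shyDef} directly, since the latter would require separately handling single-point fibers (easy) and fibers over adjacent pairs (which, without the theorem, would be awkward because adjacent points in $C$ need not be images of adjacent points in $B$). Using Theorem~\ref{shy-cont} sidesteps this entirely.
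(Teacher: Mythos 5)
Your proposal is correct and follows essentially the same route as the paper: apply Theorem~\ref{shy-cont} to $g$ and then to $f$, using $(g \circ f)^{-1}(C')=f^{-1}(g^{-1}(C'))$. Your explicit verification of continuity (via Theorem~\ref{composition}) and surjectivity of the composite is a minor addition the paper leaves implicit.
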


\begin{proof}
Let $C'$ be a connected subset of $C$. By Theorem~\ref{shy-cont},
$g^{-1}(C')$ is a connected subset of $B$. Therefore, by
Theorem~\ref{shy-cont}, $(g \circ f)^{-1}(C')=f^{-1}(g^{-1}(C'))$
is a connected subset of $A$. The assertion follows from
Theorem~\ref{shy-cont}.
\end{proof}

We will need the following.

\begin{prop}
\label{productMap}
Let $f: (A,\alpha) \to (C,\gamma)$  and $g: (B,\beta) \to (D,\delta)$.
Then $f$ and $g$ are continuous if and only if the function
$f \times g: (A \times B, k_*(\alpha,\beta)) \to (C \times D, k_*(\gamma,\delta))$ defined by $(f \times g)(a,b) = (f(a), g(b))$ is 
continuous.
\end{prop}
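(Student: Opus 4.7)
The plan is to use the point-wise characterization of continuity (Theorem~\ref{pt-cont}) together with the three-part definition of the normal product adjacency (Definition~\ref{normal}); no deeper machinery appears necessary. The proof will handle the two implications separately.

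For the forward direction, I would assume $f$ and $g$ are continuous and pick $k_*(\alpha,\beta)$-adjacent points $(a_0,b_0),(a_1,b_1)\in A\times B$. By Definition~\ref{normal}, one of three cases holds: either $a_0=a_1$ and $b_0\adj b_1$; or $a_0\adj a_1$ and $b_0=b_1$; or $a_0\adj a_1$ and $b_0\adj b_1$. In each case I apply Theorem~\ref{pt-cont} to $f$ and to $g$ separately, concluding that $f(a_0)$ equals or is $\gamma$-adjacent to $f(a_1)$, and similarly for $g$. Reading the three cases through Definition~\ref{normal} again (and noting that the ``equal'' option collapses cases three and two, or three and one), the images $(f(a_0),g(b_0))$ and $(f(a_1),g(b_1))$ are either equal or $k_*(\gamma,\delta)$-adjacent, so $f\times g$ is continuous by Theorem~\ref{pt-cont}.

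For the reverse direction, I would use a ``slice'' argument rather than projections alone, since projecting a product map only recovers $f\circ p_1$ and $g\circ p_2$ directly. Fix any $b_0\in B$ (assuming $B\neq\emptyset$) and consider $\alpha$-adjacent points $a,a'\in A$. By Definition~\ref{normal}, $(a,b_0)$ and $(a',b_0)$ are $k_*(\alpha,\beta)$-adjacent, so by the assumed continuity of $f\times g$ and Theorem~\ref{pt-cont} their images $(f(a),g(b_0))$ and $(f(a'),g(b_0))$ are equal or $k_*(\gamma,\delta)$-adjacent. Since the second coordinates agree, Definition~\ref{normal} forces $f(a)=f(a')$ or $f(a)\adj f(a')$; hence $f$ is continuous. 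The argument for $g$ is symmetric, using a fixed $a_0\in A$.

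I do not expect a genuine obstacle here; the only mild care needed is (i) the case analysis in the forward direction, where one must verify that all three adjacency cases of the normal product ``upstairs'' produce adjacency cases ``downstairs'' even when $f$ or $g$ collapses an edge, and (ii) the mild nonemptiness assumption in the reverse direction, which is harmless for digital images in the usual sense. If one prefers to avoid even this, the reverse direction can be phrased via the section maps $s_{b_0}:A\to A\times B$, $a\mapsto(a,b_0)$, noting that $s_{b_0}$ is continuous directly from Definition~\ref{normal} and then writing $f=p_1\circ(f\times g)\circ s_{b_0}$ and invoking Proposition~\ref{projections} and Theorem~\ref{composition}.
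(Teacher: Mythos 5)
Your proof is correct. The forward direction is essentially identical to the paper's: a case analysis on the three clauses of Definition~\ref{normal}, using Theorem~\ref{pt-cont} coordinatewise and reassembling the conclusion. Where you diverge is the converse. The paper argues via Proposition~\ref{projections} and Theorem~\ref{composition}, writing $f = p_1 \circ (f \times g)$ and $g = p_2 \circ (f \times g)$; strictly speaking this is an abuse, since $p_1 \circ (f \times g)$ has domain $A \times B$, so what it literally yields is continuity of $f \circ q_1$ (with $q_1: A \times B \to A$ the projection), not of $f$ itself --- exactly the point you flag. Your slice argument (fix $b_0 \in B$, observe that $(a,b_0)$ and $(a',b_0)$ are $k_*(\alpha,\beta)$-adjacent when $a$ and $a'$ are $\alpha$-adjacent, and that equal second coordinates force the adjacency downstairs to land in the first coordinate) closes this gap directly from Definition~\ref{normal} and Theorem~\ref{pt-cont}, and your alternative phrasing $f = p_1 \circ (f \times g) \circ s_{b_0}$ with a continuous section $s_{b_0}$ is the cleanest way to rescue the paper's composition-based argument. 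The only cost of your route is the explicit (and harmless) nonemptiness assumption on $B$ (resp.\ $A$), which the projection formulation also tacitly needs; what it buys is an argument that is airtight at the level of domains and codomains.
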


\begin{proof}
Suppose $f$ and $g$ are continuous.
Let $(a,b)$ and $(a',b')$ be $k_*(\alpha,\beta)$-adjacent. Then
$a$ and $a'$ are equal or $\alpha$-adjacent, so
$f(a)$ and $f(a')$ are equal or $\gamma$-adjacent; and
$b$ and $b'$ are equal or $\beta$-adjacent, so
$g(b)$ and $g(b')$ are equal or $\delta$-adjacent. It follows
from Definition~\ref{normal} that
$(f \times g)(a,b)$ and $(f \times g)(a',b')$ are equal or
$k_*(\gamma,\delta)$-adjacent.  Therefore, $f \times g$ is
continuous.

Conversely, suppose $f \times g$ is continuous. From
Proposition~\ref{projections}, we know the projection maps 
$p_1: (X \times Y, k_*(\kappa,\lambda)) \to (X, \kappa)$ and
$p_2: (X \times Y, k_*(\kappa,\lambda)) \to (Y, \lambda)$
defined by $p_1(x,y)=x$, $p_2(x,y)=y$, are
continuous. It follows from Theorem~\ref{composition}
that $f = p_1 \circ (f \times g)$ and $g=p_2 \circ (f \times g)$ are continuous.
\end{proof}

\begin{prop}
\label{productSurj}
Let $f: A \to C$  and $g: B \to D$
be functions. Then the function
$f \times g: A \times B \to C \times D$ defined by $(f \times g)(a,b) = (f(a), g(b))$ is 
a surjection if and only if $f$ and $g$ are surjections.
\end{prop}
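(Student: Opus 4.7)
The plan is to prove both implications by direct chasing of elements, since this is a purely set-theoretic fact about Cartesian products and involves no adjacency structure at all.

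For the ``if'' direction, I would assume $f$ and $g$ are surjections and take an arbitrary element $(c,d) \in C \times D$. Surjectivity of $f$ gives some $a \in A$ with $f(a)=c$, and surjectivity of $g$ gives some $b \in B$ with $g(b)=d$. Then $(f \times g)(a,b) = (f(a), g(b)) = (c,d)$, which shows $f \times g$ is a surjection.

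For the ``only if'' direction, I would assume $f \times g$ is a surjection and verify each factor separately. To show $f$ is surjective, pick any $c \in C$; since digital images in this paper are implicitly nonempty, choose any $d \in D$, and apply surjectivity of $f \times g$ to $(c,d)$ to produce $(a,b) \in A \times B$ with $(f(a),g(b))=(c,d)$; in particular $f(a)=c$. The argument for $g$ is symmetric. Alternatively, one can note that $f = p_1 \circ (f \times g) \circ \iota$ where $\iota: A \to A \times B$ sends $a \mapsto (a,b_0)$ for some fixed $b_0$, but the elementwise argument is cleaner and avoids introducing extraneous maps.

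There is no real obstacle here; the only mild subtlety is the implicit nonemptiness of all images, which is a standing convention in the paper (digital images are defined as nonempty subsets of $\Z^n$ equipped with an adjacency), so the backward direction requires choosing an arbitrary element of the ``other'' factor and does not degenerate.
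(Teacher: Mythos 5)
Your proof is correct and follows essentially the same elementwise argument as the paper, whose forward direction is identical and whose converse is simply asserted to ``follow easily.'' Your explicit handling of the converse (picking an arbitrary $d \in D$ and noting the implicit nonemptiness of the images) just fills in the detail the paper omits.
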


\begin{proof}
Let $(c,d) \in C \times D$. If $f$ and $g$ are surjections,
there are $a \in A$ and $b \in B$ such that
$f(a)=c$ and $g(b)=d$. Therefore,
$(f \times g)(a,b)=(c,d)$. Thus, $f \times g$ is a surjection.

Conversely, if $f \times g$ is a surjection, it follows easily that
$f$ and $g$ are surjections.
\end{proof}

Cartesian products preserve shyness with respect to the normal product
adjacency, as shown in the following.

\begin{thm}
\label{products}
Let $f: (A,\alpha) \to (C,\gamma)$  and $g: (B,\beta) \to (D,\delta)$
be continuous surjections. Then $f$ and $g$ are shy maps if and only if
the function $f \times g: (A \times B, k_*(\alpha,\beta)) \to (C \times D, k_*(\gamma,\delta))$ is a shy map.
\end{thm}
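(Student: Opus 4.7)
The plan is to reduce to verifying the two conditions of Definition~\ref{shyDef} directly, since Propositions~\ref{productMap} and~\ref{productSurj} together already show that $f \times g$ is a continuous surjection if and only if each of $f, g$ is. A key auxiliary fact I will use throughout is the following: if $P \subseteq A$ is $\alpha$-connected and $Q \subseteq B$ is $\beta$-connected, then $P \times Q$ is $k_*(\alpha,\beta)$-connected. This follows by fixing $(p,q),(p',q')\in P\times Q$, joining $(p,q)$ to $(p',q)$ via a path in $P \times \{q\}$ (consecutive pairs adjacent by case (1) of Definition~\ref{normal}), and then joining $(p',q)$ to $(p',q')$ via a path in $\{p'\} \times Q$ (case (2)).

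For the forward direction, assume $f$ and $g$ are shy. The point preimage factors as $(f \times g)^{-1}(c,d) = f^{-1}(c) \times g^{-1}(d)$ and is connected by the auxiliary fact. For a $k_*(\gamma,\delta)$-adjacent pair $(c_0, d_0),(c_1, d_1)$, I split on the three cases of Definition~\ref{normal}. If one coordinate is fixed (say $d_0 = d_1$), the preimage again factors as a product $f^{-1}(\{c_0,c_1\}) \times g^{-1}(d_0)$ whose factors are connected by shyness. The main obstacle is the third case, in which both coordinates change and the preimage is the non-product union
\[
\bigl(f^{-1}(c_0) \times g^{-1}(d_0)\bigr) \cup \bigl(f^{-1}(c_1) \times g^{-1}(d_1)\bigr).
\]
Here I would exploit that $f^{-1}(\{c_0,c_1\})$ is the $\alpha$-connected union of the two disjoint nonempty $\alpha$-connected sets $f^{-1}(c_0), f^{-1}(c_1)$; any path realizing connectedness must traverse an edge between the pieces, so there exist $a\in f^{-1}(c_0)$, $a'\in f^{-1}(c_1)$ that are $\alpha$-adjacent. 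The symmetric argument yields $\beta$-adjacent $b\in g^{-1}(d_0)$, $b'\in g^{-1}(d_1)$. Then $(a,b)$ and $(a',b')$ are $k_*(\alpha,\beta)$-adjacent by case (3) of Definition~\ref{normal}, providing a bridge between the two product pieces, each of which is already connected by the auxiliary fact.

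For the converse, assume $f \times g$ is shy and fix any $d_0 \in D$ (which exists since $g$ is a surjection). For $c \in C$, the set $f^{-1}(c) \times g^{-1}(d_0) = (f \times g)^{-1}(c,d_0)$ is $k_*(\alpha,\beta)$-connected, so its image under the continuous projection $p_1$ of Proposition~\ref{projections}, namely $f^{-1}(c)$, is $\alpha$-connected. For $\gamma$-adjacent $c_0,c_1 \in C$, the points $(c_0,d_0)$ and $(c_1,d_0)$ are $k_*(\gamma,\delta)$-adjacent by case (2) of Definition~\ref{normal}, and
\[
(f\times g)^{-1}\bigl(\{(c_0,d_0),(c_1,d_0)\}\bigr) = f^{-1}(\{c_0,c_1\}) \times g^{-1}(d_0)
\]
is connected by shyness of $f \times g$; projecting via $p_1$ again yields $\alpha$-connectedness of $f^{-1}(\{c_0,c_1\})$. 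Thus $f$ is shy, and by the symmetric argument using $p_2$ and a fixed $c_0\in C$, so is $g$.
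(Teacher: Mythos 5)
Your proposal is correct, and at the one delicate point it is actually more careful than the paper's argument. Both you and the paper verify Definition~\ref{shyDef} directly in the forward direction (the paper's converse uses Theorem~\ref{shy-cont} on sets of the form $U \times \{d\}$ and projects, which is essentially your converse with two-point sets in place of general connected $U$; both work). The substantive difference is in the preimage of a $k_*(\gamma,\delta)$-adjacent pair $(c,d),(c',d')$. The paper uses a single L-shaped path $(P \times \{b\}) \cup (\{a'\} \times Q)$ with $P \subset f^{-1}(\{c,c'\})$ and $Q \subset g^{-1}(\{d,d'\})$, and asserts that $(f^{-1}(\{c,c'\}) \times \{b\}) \cup (\{a'\} \times g^{-1}(\{d,d'\}))$ lies inside $(f\times g)^{-1}(\{(c,d),(c',d')\})$; when $c \neq c'$ and $d \neq d'$ this inclusion can fail, since a point $(x,b)$ with $f(x)=c'$ and $g(b)=d$ (indeed possibly the corner $(a',b)$ itself) maps to $(c',d) \notin \{(c,d),(c',d')\}$. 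You isolate exactly this mixed case, note that the preimage is then the union of the two connected product pieces $f^{-1}(c)\times g^{-1}(d)$ and $f^{-1}(c')\times g^{-1}(d')$, and bridge them by extracting an $\alpha$-adjacent pair $a \in f^{-1}(c)$, $a' \in f^{-1}(c')$ from the connectedness of $f^{-1}(\{c,c'\})$ (and similarly in $B$), so that case (3) of Definition~\ref{normal} supplies an edge between the pieces. This is the correct repair, and together with your auxiliary lemma that a product of connected sets is $k_*$-connected, plus Propositions~\ref{productMap}, \ref{productSurj}, and \ref{projections} for the formal bookkeeping, your argument is complete; what you give up is only the brevity of the paper's single-path formulation, and what you gain is an argument that is valid in the case where both coordinates change.
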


\begin{proof}
Suppose $f$ and $g$ are shy. Then they are surjections. By 
Propositions~\ref{productMap} and~\ref{productSurj}, $f \times g$ is a 
continuous surjection.

Let $(c,d) \in C \times D$ and let 
$(a,b), (a',b') \in (f \times g)^{-1}(c,d)$. Since
$f^{-1}(c)$ is connected, there is a path $P$ in
$f^{-1}(c)$ from $a$ to $a'$. Therefore,
$P \times \{b\}$ is a path in
$f^{-1}(c) \times \{b\} \subset (f \times g)^{-1}(c,d)$ from
$(a,b)$ to $(a',b)$.  Since
$g^{-1}(d)$ is connected, there is a path $Q$ in
$g^{-1}(d)$ from $b$ to $b'$. Therefore,
$\{a'\}\times Q$ is a path in
$\{a'\}\times g^{-1}(d) \subset (f \times g)^{-1}(c,d)$ from
$(a',b)$ to $(a',b')$. Thus,
$(P \times \{b\}) \cup (\{a'\}\times Q)$ is a path in
$(f \times g)^{-1}(c,d)$ from $(a,b)$ to $(a',b')$. Therefore,
$(f \times g)^{-1}(c,d)$ is connected.

Let $(c,d)$ and $(c',d')$ be $k_*(\gamma,\delta)$-adjacent in
$C \times D$. Then $c$ and $c'$ are equal or $\alpha$-adjacent,
and $d$ and $d'$ are equal or $\beta$-adjacent. Let
$\{(a,b), (a',b')\} \subset (f \times g)^{-1}(\{(c,d), (c',d')\}$.
Since $f$ is shy,
$f^{-1}(\{c,c'\})$ is connected, so there is a path $P$ in
$f^{-1}(\{c,c'\})$ from $a$ to $a'$. Similarly, there is a path
$Q$ in $g^{-1}(\{d,d'\})$ from $b'$ to $b$. Thus,
$(P \times \{b\}) \cup (\{a'\} \times Q)$ is a path in
\[ (f^{-1}(\{c,c'\}) \times \{b\}) \cup 
 (\{a'\} \times g^{-1}(\{d,d'\}) \subset
 (f \times g)^{-1}(\{(c,d), (c',d')\})
 \]
from $(a,b)$ to $(a',b')$. Therefore,
$(f \times g)^{-1}(\{(c,d), (c',d')\})$ is connected.

It follows from Definition~\ref{shyDef} that $f \times g$ is a
shy map.

Conversely, suppose $f \times g$ is a shy map. It follows from
Propositions~\ref{productMap} and~\ref{productSurj}
that $f$ and $g$ are continuous surjections.

Let $U$ be a connected subset of $C$. Then for $d \in D$, $U \times \{d\}$
is a connected subset of $(C \times D, k_*(\gamma,\delta))$. The shyness
of $f \times g$ implies 
\[ f^{-1}(U) \times g^{-1}(\{d\})=(f \times g)^{-1}(U \times \{d\}) 
\]
is connected in $(A \times B, k_*(\alpha,\beta))$. From
Proposition~\ref{projections}, it follows that
\[ f^{-1}(U) = p_1(f^{-1}(U) \times g^{-1}(\{d\}))
\]
is connected in $A$. Thus, $f$ is shy. A similar argument shows $g$ is shy.
\end{proof}

\begin{cor}
Let $A \subset \Z^m$, $B \subset \Z^n$, $C \subset \Z^u$,
$D \subset \Z^v$. Suppose $f: (A,c_m) \to (C,c_u)$  and 
$g: (B,c_n) \to (D,c_v)$ are functions. Then the function
$f \times g: (A \times B, c_{m+n}) \to (C \times D, c_{u+v})$ 
is a shy map if and only if $f$ and $g$ are shy maps.
\end{cor}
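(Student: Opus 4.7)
The plan is to reduce this corollary to Theorem~\ref{products} via Proposition~\ref{prodAndc_u}. Theorem~\ref{products} establishes the equivalence between shyness of $f$ and $g$ and shyness of $f \times g$ when the product spaces carry the normal product adjacency $k_*$. So the task is just to convert the $c_u$-adjacency hypothesis of the corollary into the $k_*$-form required by the theorem.

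First I would observe that Proposition~\ref{prodAndc_u}, applied twice, gives $k_*(c_m, c_n) = c_{m+n}$ on $A \times B \subset \Z^{m+n}$ and $k_*(c_u, c_v) = c_{u+v}$ on $C \times D \subset \Z^{u+v}$. Consequently the map $f \times g : (A \times B, c_{m+n}) \to (C \times D, c_{u+v})$ is literally the same function, with literally the same notion of adjacency on domain and codomain, as the map $f \times g : (A \times B, k_*(c_m,c_n)) \to (C \times D, k_*(c_u,c_v))$.

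Then I would invoke Theorem~\ref{products} directly with $(\alpha,\beta,\gamma,\delta) = (c_m, c_n, c_u, c_v)$: it asserts that the latter product map is shy if and only if $f$ and $g$ are shy. Combined with the adjacency identification in the previous paragraph, this is exactly the corollary.

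There is no real obstacle here; the corollary is essentially a restatement of Theorem~\ref{products} in the special case of the $c_u$-adjacencies, and the only substantive step is the observation that the normal product adjacency coincides with the appropriate $c_{m+n}$-adjacency, which is precisely what Proposition~\ref{prodAndc_u} provides.
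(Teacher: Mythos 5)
Your proposal is correct and follows exactly the paper's own argument: apply Proposition~\ref{prodAndc_u} to identify $k_*(c_m,c_n)=c_{m+n}$ and $k_*(c_u,c_v)=c_{u+v}$, then quote Theorem~\ref{products}. Nothing further is needed.
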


\begin{proof} By Proposition~\ref{prodAndc_u},
in $\Z^{m+n}$, $k_*(c_m,c_n) = c_{m+n}$ and in
$\Z^{u+v}$, $k_*(c_u,c_v) = c_{u+v}$. The assertion
follows from Theorem~\ref{products}.
\end{proof}

The digital image $(C,\kappa)$ is the {\em wedge} of its subsets 
$X$ and $Y$, denoted $C = X \wedge Y$, if $C = X \cup Y$ and
$X \cap Y = \{x_0\}$ for some point $x_0$, such that if $x \in X$, $y \in Y$,
and $x$ and $y$ are $\kappa$-adjacent, then $x_0 \in \{x,y\}$.
Let $f: A \to C$ and
$g: B \to D$ be functions between digital images, with
$A \cap B = \{x_0\}$, $C \cap D = \{y_0\}$, and
$f(x_0)=y_0=g(x_0)$. We define
$f \wedge g: A \wedge B \to C \wedge D$ by
\[ (f \wedge g)(x) = \left \{ \begin{array}{ll}
                     f(x) & \mbox{if } x \in A; \\
                     g(x) & \mbox{if } x \in B.
                     \end{array} \right.
\]
Since $f(x_0)=y_0=g(x_0)$, $f \wedge g$ is well defined. We have
the following.

\begin{thm}
Let $f: A \to C$ and $g: B \to D$ be functions between digital 
images, with $A \cap B = \{x_0\}$, $C \cap D = \{y_0\}$, and
$f(x_0)=y_0=g(x_0)$. Then $f$ and $g$ are shy maps if and only if
$f \wedge g$ is a shy map.
\end{thm}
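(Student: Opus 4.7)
The plan is to reduce both directions to the characterization of shyness by connected preimages (Theorem~\ref{shy-cont}), exploiting the fact that in any wedge the only ``bridge'' between the two wedged pieces is the common point. Continuity and surjectivity of $f \wedge g$ versus those of $f$ and $g$ are straightforward from the wedge definitions and Theorem~\ref{pt-cont}, so the real work is transferring connectedness of preimages between $A \wedge B$ and the individual summands. The crucial structural tool that I would establish first is a decomposition lemma: if $E$ is a connected subset of a wedge $X \wedge Y$ with common point $x_0$ and $x_0 \in E$, then $E \cap X$ is connected in $X$ and $E \cap Y$ is connected in $Y$. The proof takes a path in $E$ joining two points of $E \cap X$ and notes that whenever the path leaves $X$ into $Y \setminus \{x_0\}$, it can enter and exit that region only through $x_0$ (by the wedge adjacency restriction); collapsing each such excursion to a single visit at $x_0$ yields a path inside $E \cap X$.

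For the forward direction, suppose $f$ and $g$ are shy, and let $E$ be a connected subset of $C \wedge D$. Using $(f \wedge g)^{-1}(E) = f^{-1}(E \cap C) \cup g^{-1}(E \cap D)$, I would split into cases. If $E$ lies entirely in $C$ (or entirely in $D$) and misses $y_0$, the preimage equals $f^{-1}(E)$ (respectively $g^{-1}(E)$) and is connected by shyness. Otherwise $y_0 \in E$ (either given outright, or forced because any path in $C \wedge D$ between a point of $C \setminus \{y_0\}$ and a point of $D \setminus \{y_0\}$ must visit $y_0$), and the lemma gives $E \cap C$ and $E \cap D$ connected; shyness of $f$ and $g$ then makes $f^{-1}(E \cap C)$ and $g^{-1}(E \cap D)$ connected, and they share $x_0 \in f^{-1}(y_0) \cap g^{-1}(y_0)$, so their union is connected. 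Theorem~\ref{shy-cont} then gives that $f \wedge g$ is shy.

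For the reverse direction, suppose $f \wedge g$ is shy. Continuity and surjectivity of $f$ and $g$ follow directly from the wedge adjacency structure and the definition of $f \wedge g$. For any connected $E \subset C$, the set $E$ is also connected in $C \wedge D$, so $(f \wedge g)^{-1}(E)$ is connected in $A \wedge B$; I would then apply the same excursion-collapsing argument inside $A \wedge B$. Any path between two points of $f^{-1}(E) \subset A$ that wanders into $B$ must cross $x_0$, so it can be shortened to a path staying in $A$, and each $A$-point of the preimage lies in $f^{-1}(E)$, so $f^{-1}(E)$ is connected in $A$. Theorem~\ref{shy-cont} gives that $f$ is shy, and the symmetric argument handles $g$. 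The main obstacle is formulating the wedge path-decomposition lemma carefully enough that it applies to both the preimages ``upstairs'' in $A \wedge B$ and the sets ``downstairs'' in $C \wedge D$; once that structural fact is in hand, the rest is bookkeeping.
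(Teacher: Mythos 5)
Your proposal is correct, and both directions run, as in the paper, through Theorem~\ref{shy-cont} together with the structural fact that the only bridge between the two summands of a wedge is the wedge point; the difference is in how that fact is packaged. The paper, in the forward direction, fixes two points of the preimage, chooses a path $P$ downstairs in the connected set $U\subset C\wedge D$, splits $P$ into alternating subpaths lying in $C\cap U$ and $D\cap U$, and pulls each subpath back to a connected set containing $x_0$; you instead prove a set-level lemma (a connected subset $E$ of a wedge containing the wedge point meets each summand in a connected set) and then pull back just the two pieces $E\cap C$ and $E\cap D$, whose preimages meet at $x_0$. Your organization is a modest but genuine variant: it gives a cleaner two-piece decomposition, and it pays off in the converse, where the paper simply writes $(f\wedge g)^{-1}(U)=f^{-1}(U)$ for connected $U\subset C$ --- an identity that is off by the piece $g^{-1}(y_0)\subset B$ when $y_0\in U$ --- so that passing from connectedness of $(f\wedge g)^{-1}(U)$ in $A\wedge B$ to connectedness of $f^{-1}(U)$ in $A$ really does require your excursion-collapsing step (equivalently, your lemma applied upstairs in $A\wedge B$). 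In short, you prove the same theorem by the same underlying mechanism, but your key lemma is stated once at the level of sets rather than rediscovered along a path, and it closes a small gap the paper leaves implicit in the reverse implication; the paper's path-segment version, for its part, avoids stating any auxiliary lemma and keeps the argument entirely inside the single proof.
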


\begin{proof} If $f$ and $g$ are shy, it is easy to see that
$f \wedge g$ is continuous and surjective.

Let $U$ be a connected subset of $C \wedge D$. Let 
$v_0,v_1 \in f^{-1}(U)$. The connectedness of $U$ implies there
is a path $P$ in $U$ from $f(v_0)$ to $f(v_1)$, i.e., a
continuous $p: [0,m]_{\Z} \to U$ such that 
$p(0)=f(v_0)$ and $p(m)=f(v_1)$, where $P=p([0,m]_{\Z})$.
\begin{itemize}
\item If $U \subset C$, then $(f \wedge g)^{-1}(P) = f^{-1}(P)$ is
      a connected subset of $A$.
\item If $U \subset D$, then $(f \wedge g)^{-1}(P) = g^{-1}(P)$ is
      a connected subset of $B$.
\item Otherwise, there are integers
      $0 = i_1 < i_2 < \cdots < i_n = m$ such that 
      $p([i_j,i_{j+1}]_{\Z}) \subset C \cap U$ for even $j$,
      $p([i_j,i_{j+1}]_{\Z}) \subset D \cap U$ for odd $j$.
      Therefore, 
      \[ (f\wedge g)^{-1}(P) = \bigcup_{\mbox{even } j}f^{-1}(p([i_j,i_{j+1}]_{\Z})) \cup \bigcup_{\mbox{odd } j}g^{-1}(p([i_j,i_{j+1}]_{\Z})) \]
      is a union of connected sets, each containing $x_0$. Hence
      $(f\wedge g)^{-1}(P)$ is a connected subset of
      $(f\wedge g)^{-1}(U)$ containing $\{v_0,v_1\}$. Thus,
      $(f\wedge g)^{-1}(U)$ is connected.
\end{itemize}
In all cases, $(f\wedge g)^{-1}(U)$ is connected.
We conclude from Theorem~\ref{shy-cont} 
that $f\wedge g$ is a shy map.

Conversely, suppose $f\wedge g$ is a shy map. Then both $f$
and $g$ are continuous surjections. If
$U$ is a connected subset of $C$ and $V$ is a connected subset of $D$, we have by 
Theorem~\ref{shy-cont} that
$f^{-1}(U) = (f\wedge g)^{-1}(U)$ is a connected subset of
$A$ and
$g^{-1}(V) = (f\wedge g)^{-1}(V)$ is a connected subset of
$B$. From Theorem~\ref{shy-cont}, it follows that
$f$ and $g$ are shy maps.
\end{proof}

\section{Shy maps into $\Z$}
\label{intoZ}
\begin{thm}
\label{monotone}
Let $X$ and $Y$ be connected subsets of $(\Z,c_1)$,
and let $f: X \to Y$ be a continuous surjection.
Then $f$ is shy if and only if $f$ is either 
monotone non-decreasing or monotone non-increasing.
\end{thm}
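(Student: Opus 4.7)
The plan is to prove the two directions separately; the harder part is the forward direction ($f$ shy $\Rightarrow$ $f$ monotone), which I will attack by contrapositive using Theorem~\ref{shy-cont}. For the easy direction ($\Leftarrow$), I assume without loss of generality that $f$ is monotone non-decreasing. For each $y \in Y$, the fiber $f^{-1}(y)$ is a contiguous subset of $\Z$: if $a < c < b$ with $f(a)=f(b)=y$, monotonicity forces $f(c) = y$, so $f^{-1}(y)$ is $c_1$-connected. For $c_1$-adjacent $y_0, y_1 \in Y$ with $y_1 = y_0+1$, the fiber $f^{-1}(y_0)$ lies entirely to the left of $f^{-1}(y_1)$; if $\alpha$ is the right endpoint of $f^{-1}(y_0)$ then Theorem~\ref{pt-cont} combined with monotonicity and the surjectivity of $f$ force $f(\alpha+1) = y_1$, so $f^{-1}(\{y_0,y_1\})$ is again a contiguous interval. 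Definition~\ref{shyDef} then gives shyness.

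For the forward direction ($\Rightarrow$), I argue the contrapositive and exhibit a connected singleton $\{M-1\} \subset Y$ whose preimage is disconnected, violating Theorem~\ref{shy-cont}. Since $f$ is not monotone, there exist positions $\ell$ and $k$ in $X$ with $f(\ell+1) = f(\ell)+1$ and $f(k+1) = f(k)-1$ (the steps are $\pm 1$ by Theorem~\ref{pt-cont}). The cases $\ell < k$ and $k < \ell$ are symmetric, the first producing a local maximum argument and the second a local minimum argument, so I handle the first. Set $I = [\ell, k+1]_{\Z}$ and $M = \max_{j \in I} f(j)$, and let $b_1 = \min\{j \in I : f(j)=M\}$ and $b_2 = \max\{j \in I : f(j)=M\}$. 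Because $f(\ell) < M$ and $f(k+1) < M$ we get $\ell < b_1 \le b_2 < k+1$, and by the extremality of $b_1, b_2$ together with Theorem~\ref{pt-cont}, $f(b_1-1) = f(b_2+1) = M-1$. Hence $f^{-1}(M-1)$ contains both $b_1-1$ and $b_2+1$ but not the intermediate point $b_1$, so it is not a contiguous subset of $\Z$ and therefore not $c_1$-connected.

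I expect the main obstacle to be the bookkeeping around plateaus: the value $M$ need not be attained at a single point, so one cannot simply point to \emph{the} strict local maximum of $f$. Taking the first and last attainments $b_1, b_2$ and inspecting the positions immediately outside them sidesteps this issue, since $b_1-1 < b_1 < b_2+1$ regardless of how wide the plateau $\{j \in I : f(j)=M\}$ happens to be.
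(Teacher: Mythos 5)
Your proposal is correct and follows essentially the same route as the paper: the forward direction finds a value one below an interior maximum (or one above an interior minimum) whose preimage contains points on both sides of the extremal plateau but not the plateau itself, hence is not a $c_1$-interval, and the reverse direction uses monotonicity to show the relevant preimages are intervals of integers. The only cosmetic differences are that you verify the two conditions of Definition~\ref{shyDef} directly instead of invoking Theorem~\ref{shy-cont} on an arbitrary connected $Y'$, and you make the plateau bookkeeping explicit where the paper appeals to a discrete intermediate-value step.
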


\begin{proof}
Suppose $f$ is shy. If $f$ is not monotone, then either
\begin{equation}
\label{b-max}
\mbox{for some } a,b,c \in X \mbox{ with } a < b < c,~f(a)<f(b)
\mbox{ and } f(b)>f(c),
\end{equation}
or
\begin{equation}
\label{b-min}
\mbox{for some } a,b,c \in X \mbox{ with } a < b < c,~f(a)>f(b)
\mbox{ and } f(b)<f(c).
\end{equation}
In case~(\ref{b-max}), the continuity of $f$ implies there
exist $s,t \in X$ such that $a \leq s < b < t \leq c$ and
$f(s)=f(t)=f(b)-1$. 
Thus, $s,t \in f^{-1}(f(b)-1)$ and 
$b \not \in f^{-1}(f(b)-1)$, so $f^{-1}(f(b)-1)$ is not $c_1$-connected,
a contradiction of the shyness of $f$. Case~(\ref{b-min}) generates
a similar contradiction. Thus, we obtain a contradiction by
assuming that $f$ is not monotone.

Suppose $f$ is monotone. We may assume without loss of generality 
that $f$ is non-decreasing. Let $Y'$ be a connected subset of $Y$ 
and let $x_0,x_1 \in f^{-1}(Y')$. Without loss of generality,
$x_0 < x_1$. Since $f$ is continuous and non-decreasing,
$f([x_0,x_1]_{\Z}) = [f(x_0),f(x_1)]_{\Z}$ is a connected set
containing $\{f(x_0),f(x_1)\} \subset Y'$. Thus,
\[ [x_0,x_1]_{\Z} \subset f^{-1}([f(x_0),f(x_1)]_{\Z}) \subset f^{-1}(Y'),
\]
so $[x_0,x_1]_{\Z}$ is a connected subset of $f^{-1}(Y')$ containing
$\{x_0,x_1\}$. Since $x_0$ and $x_1$ were arbitrarily chosen,
we must have that $f^{-1}(Y')$ is $c_1$-connected. Therefore,
$f^{-1}$ is a connectivity preserving multivalued function.
It follows from Theorem~\ref{shy-cont} that $f$ is shy.
\end{proof}

\begin{thm}
Let $S$ be a digital simple closed curve. Let
$f: S \to Y \subset (\Z,c_1)$ be a shy map. Then either
$Y = \{z\}$ or $Y = \{z,z+1\}$ for some $z \in \Z$.
\end{thm}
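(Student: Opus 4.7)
The plan is to argue by contradiction: assume $Y$ contains three or more integers, find three consecutive-in-$\Z$ levels that must lie in $Y$ (since $f(S)=Y$ is connected), and use shyness together with the structural fact that removing a connected proper subset from a simple closed curve leaves a connected set.

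First I would observe that since $f$ is continuous and $S$ is connected, $Y=f(S)$ is a connected subset of $(\Z,c_1)$, so $Y=[a,c]_{\Z}$ for some $a\le c$. If $c=a$, we are in the first case; if $c=a+1$, the second. So suppose for contradiction that $c\ge a+2$, and pick an integer $b$ with $a\le b-1$ and $b+1\le c$. Set $U_1=[a,b-1]_{\Z}$ and $U_2=[b+1,c]_{\Z}$; both are connected subsets of $Y$, and both are nonempty preimages since $f$ is surjective.

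Next I would apply Theorem~\ref{shy-cont} to conclude that $f^{-1}(U_1)$ and $f^{-1}(U_2)$ are connected subsets of $S$, and the shyness of $f$ (Definition~\ref{shyDef}) gives that $f^{-1}(b)$ is also connected; these three sets partition $S$. I then verify that no point of $f^{-1}(U_1)$ is $c_1$-adjacent (in $S$) to any point of $f^{-1}(U_2)$: if $x\in f^{-1}(U_1)$ and $y\in f^{-1}(U_2)$ were adjacent, then by Theorem~\ref{pt-cont} $f(x)$ and $f(y)$ would be equal or $c_1$-adjacent in $\Z$, yet $f(y)-f(x)\ge(b+1)-(b-1)=2$, a contradiction. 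Consequently $S\setminus f^{-1}(b)=f^{-1}(U_1)\cup f^{-1}(U_2)$ is a disjoint union of two nonempty sets with no edge between them, i.e., is disconnected.

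The final step is to contradict this: in a digital simple closed curve, the connected subsets are precisely the arcs, and removing a nonempty proper connected arc from the cycle leaves a single (possibly degenerate) arc, which is connected. Since $f^{-1}(b)$ is a nonempty proper connected subset of $S$ (proper because $f^{-1}(a)\neq\emptyset$ is disjoint from $f^{-1}(b)$), its complement in $S$ must be connected, contradicting the previous paragraph. Hence $|Y|\le 2$, and combined with the connectedness of $Y$ we obtain $Y=\{z\}$ or $Y=\{z,z+1\}$. The main obstacle, and the place I would be most careful, is the structural claim that the complement of a connected proper subset of a digital simple closed curve is connected; I would justify this by recalling that every connected subset of a cycle is a consecutive arc and removing such an arc leaves the complementary arc intact.
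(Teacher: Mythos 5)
Your argument is correct, but it takes a genuinely different route from the paper's. The paper argues via extremal values: it picks $x_0,x_1$ attaining $z_0=\min f(S)$ and $z_1=\max f(S)$, uses the two distinct arcs of $S$ joining $x_0$ to $x_1$, and applies a discrete intermediate-value argument to $f$ restricted to each arc to produce points $a,b$ with $f(a)=f(b)=z_0+1$ lying in different components of $S\setminus f^{-1}(\{z_0,z_1\})$; then $f^{-1}(z_0+1)$ is disconnected, contradicting Definition~\ref{shyDef} directly. You avoid any intermediate-value step: writing $Y=[a,c]_{\Z}$ (connectedness of $f(S)$), you split off a middle level $b$, use Theorem~\ref{pt-cont} to show there is no edge of $S$ between $f^{-1}([a,b-1]_{\Z})$ and $f^{-1}([b+1,c]_{\Z})$, so $S\setminus f^{-1}(b)$ is disconnected, and then contradict the shyness-given connectedness of $f^{-1}(b)$ via the structural lemma that a nonempty \emph{proper} connected subset of a cycle is an arc of consecutive vertices whose complement is again an arc, hence connected. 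What each approach buys: the paper's proof stays at the level of paths and continuity but quietly relies on the discrete intermediate value property on arcs and on identifying the components of $S$ minus the extremal preimages; yours isolates all the graph-theoretic content of ``simple closed curve'' in one clean, easily verified lemma and needs shyness only for singleton preimages --- your appeal to Theorem~\ref{shy-cont} for $f^{-1}(U_1)$ and $f^{-1}(U_2)$ is superfluous, since only their nonemptiness and mutual non-adjacency are used. Two small points of care: adjacency in $S$ is the adjacency $S$ carries as a digital image in some $\Z^n$, not $c_1$ (this does not affect your use of Theorem~\ref{pt-cont}), and the classification ``connected subsets of a cycle are arcs'' should be asserted only for proper subsets, which is exactly how you apply it since $f^{-1}(b)$ misses $f^{-1}(a)\neq\emptyset$.
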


\begin{proof}
If $f$ is not a constant function, i.e., if $Y \neq \{z\}$, then
\[ z_0 = \min \{f(x) \, | \, x \in S\} < 
   \max \{f(x) \, | \, x \in S\} = z_1.
\]
Let $x_i \in f^{-1}(z_i)$, $i \in \{0,1\}$. There are two
distinct digital arcs, $A$ and $B$, connecting $x_0$ and $x_1$ 
in~$S$. If $z_1 - z_0 > 1$, then the continuity of $f|_A$ and
$f|_B$ implies there are points $a \in A$ and $b \in B$ such
that $f(a)=f(b)=z_0+1$. Since $a$ and $b$ are in distinct 
components of $S \setminus (f^{-1}(\{z_0,z_1\})$, 
$f^{-1}(z_0+1)$ is disconnected. This is
contrary to the assumption that $f$ is shy. The assertion follows.
\end{proof}

\begin{thm}
\label{disconnections}
Let $(X,\kappa)$ be a digital image and let
$r \in X$ be such that $X \setminus \{r\}$ is
$\kappa$-disconnected. Let $f: (X, \kappa) \to Y \subset (\Z,c_1)$ be
a shy map. Then there are at most 2 components of
$X \setminus \{r\}$ on which $f$ is not equal to the constant 
function with value $f(r)$.
\end{thm}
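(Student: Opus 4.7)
The plan is to argue by contradiction: assume there are three distinct components $C_1, C_2, C_3$ of $X \setminus \{r\}$ on each of which $f$ takes some value different from $f(r)$, and select witnesses $x_i \in C_i$ with $f(x_i) \neq f(r)$ for $i \in \{1,2,3\}$. I will produce a connected subset of $X \setminus \{r\}$ meeting two of the $C_i$, which contradicts their being distinct components.

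First I would observe that the image $Y = f(X)$ is a connected subset of $(\Z, c_1)$, hence a digital interval $[a,b]_{\Z}$; this relies on the (implicit) assumption that $X$ is connected, matching the setting of the other theorems in this section. The consequence I need is that for any $u, v \in Y$, the interval $[\min(u,v),\max(u,v)]_{\Z}$ is automatically contained in $Y$.

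Next I apply the pigeonhole principle. Since each of $f(x_1), f(x_2), f(x_3)$ lies strictly above or strictly below $f(r)$, two of them must lie on the same side; without loss of generality, take $f(x_1), f(x_2) > f(r)$ (the other case is symmetric). Setting $M = \min\{f(x_1), f(x_2)\}$ and $N = \max\{f(x_1), f(x_2)\}$, the set $Y' = [M,N]_{\Z}$ is a connected subset of $Y$ satisfying $f(r) < M$.

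Finally, Theorem~\ref{shy-cont} yields that $f^{-1}(Y')$ is a connected subset of $X$. It contains $x_1$ and $x_2$ while excluding $r$ (because $f(r) \notin Y'$), so it is a connected subset of $X \setminus \{r\}$ joining $x_1 \in C_1$ to $x_2 \in C_2$ — the sought contradiction. I anticipate no real obstacle; the crux is simply recognizing that shyness packages $f^{-1}(Y')$ as a connected set automatically avoiding $r$, which the pigeonhole step makes possible.
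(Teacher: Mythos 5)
Your proof is correct, and it takes a genuinely different (and somewhat slicker) route than the paper's. The paper works at the level of single fibers: for each component $A$ of $X \setminus \{r\}$ on which $f$ is not constantly $f(r)$, an intermediate-value-type continuity argument produces a point $a \in A$ with $|f(a)-f(r)|=1$; then the connectedness of $f^{-1}(f(a))$ (Definition~\ref{shyDef}), together with the fact that any path joining distinct components of $X \setminus \{r\}$ must pass through $r$, shows that at most one such component can carry the value $f(r)-1$ and at most one the value $f(r)+1$. You instead pigeonhole two witness values onto the same side of $f(r)$ and apply Theorem~\ref{shy-cont} to the whole interval $[M,N]_{\Z}$ between them, obtaining a connected preimage that avoids $r$ yet meets two distinct components of $X \setminus \{r\}$ --- a direct contradiction. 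This bypasses the intermediate-value step entirely, at the cost of invoking the stronger characterization of shyness in Theorem~\ref{shy-cont} rather than only the connectedness of point preimages. One remark: the connectedness of $X$ that you flag explicitly (needed so that $Y=f(X)$ is an interval and hence $[M,N]_{\Z} \subset Y$) is equally implicit in the paper's proof, whose step ``by continuity there exists $a \in A$ with $|f(a)-f(r)|=1$'' requires every component of $X \setminus \{r\}$ to be adjacent to $r$; and the hypothesis is genuinely needed, since for $X=\{-1,0,1,5\} \subset \Z$ with $c_1$, $r=0$, and $f$ the identity (an isomorphism, hence shy), $f$ differs from the constant $f(r)$ on all three components $\{-1\}$, $\{1\}$, $\{5\}$ of $X \setminus \{r\}$. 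So you were right to state that assumption rather than leave it tacit.
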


\begin{proof} 
Suppose $A$ is a component of $X\setminus \{r\}$ on which $f$ 
is not constant.
By continuity of $f$, there exists $a \in A$ such that
$|f(a)-f(r)|=1$. Suppose $B$ is another component of 
$X\setminus \{r\}$ on which $f$ is not constant. Similarly,
there exists $b \in b$ such that $|f(b)-f(r)|=1$. We must have
$f(a) \neq f(b)$, since $f^{-1}(f(a))$ is connected and every
path in $X$ from $a$ to $b$ contains $r$. Therefore, we may
assume $f(a)=f(r)-1$, $f(b)=f(r)+1$.

Suppose $C$ is a component of $X\setminus \{r\}$ that is distinct
from $A$ and $B$. Suppose there exists $c \in C$ such that
$f(c) \neq f(r)$. If $f(c) < f(r)$ then, by continuity of $f$,
there exists $c' \in C$ such that $f(c')=f(r)-1=f(a)$. But this
is impossible, since $f^{-1}(f(a))$ is connected and every
path in $X$ from $a$ to $c'$ contains $r$. Similarly, if we
assume $f(c) > f(r)$ we get a contradiction. The assertion follows.
\end{proof}

\begin{exl}
\label{tree-ex}
Let $T$ be a tree. Let $r$ be the root vertex of $T$.
Let $\{v_i\}_{i=0}^m$ be the
set of vertices adjacent to $r$. Let $T_i$ be the subtree of $T$ with
vertices $r, v_i$, and the descendants of $v_i$ in $T$ 
(see Figure~1).
Let $f: T \to Y \subset (\Z,c_1)$ be a shy function. Then $f$ is constant on all but at most
2 of the $T_i$.
\end{exl}

\begin{proof} The assertion follows from Theorem~\ref{disconnections}.
\end{proof}

\begin{figure}
\label{tree}
\includegraphics[height=3in]{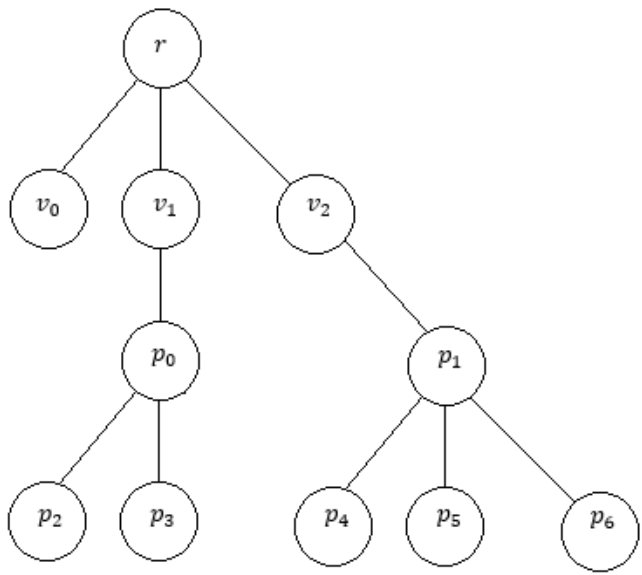}
\caption{A tree $T$ to illustrate
Example~\ref{tree-ex}. The vertex sets of $T_0$, $T_1$, and $T_2$ are,
respectively, $\{r, v_0\}$, $\{r,v_1,p_0, p_2, p_3\}$, and
$\{r,v_2, p_1, p_4, p_5, p_6\}$. A shy map from $T$ to a subset of
$(\Z,c_1)$ is non-constant on at most 2 of $T_0$, $T_1$, and $T_2$.
}
\end{figure}

\section{Further remarks}
We have made several contributions to our knowledge of digital
shy maps. In section~\ref{moreProps}, we studied the relations between
shy maps and both constant functions and isomorphisms.
In section~\ref{ops}, we showed
that shyness is preserved by compositions,
certain Cartesian products, and wedges. In section~\ref{intoZ},
we demonstrated several restrictions on shy maps onto subsets of $(\Z,c_1)$.

This research did not receive any specific grant from funding agencies in the 
public, commercial, or not-for-profit sectors.

\end{document}